\def \ga {\gamma}
\def \de {\delta}
\def \ra {\Rightarrow}
\def \be {\beta}
\def \al {\alpha}
\def \b {\beta}

\def \ep {\epsilon}
\def \1 {\frac{1}{2}}

\documentclass{article} 
\usepackage{url,amsmath,amsfonts,amssymb,paulmath}

\newcommand{\sub}{\subseteq}
\title{Free structure of factors}
\author{Ted Hurley} 
\date{} 
\begin{document}
\maketitle
\leftmargin=1pt

\begin{abstract} 
Factors  $\frac{X}{Y}$ in a free group $F$ with $Y$ normal in
$X$ are considered. Precise results on 
 the  free
structure of ${Y}$ relative to the free structure of ${X}$
when $\frac{X}{Y}$ is abelian are obtained. Some extensions and
applications  are given as for example to the construction of lower
central factors in general groups. A
collecting process on free generators, 
which gives basic commutator-type free generators for some subgroups, is also
presented. The notion of {\em relative basic commutators} is developed.
\end{abstract}

\section{Introduction}


This paper is concerned with the {\em free structure} of 
factors  $\frac{X}{Y}$ in a free group $F$,  
 by which is meant  the free structure of $Y$
relative to the free structure  of $X$. More precisely  
the free structure of $\frac{X}{Y}$ determines a free basis 
$A\cup B$ for $X$ such that $B\cup C$ is a free basis for $Y$ where
$C$ is a set obtained from $A,B$ in a basic commutator type
construction. The cases where $\frac{X}{Y}$
is abelian is dealt with in detail and some extensions and
applications are given. A collecting process on free generators which   
gives basic commutator-type free generators for some subgroups is also 
presented. 
  

Let $\frac{X}{Y}$ be a factor in a general group $G$
which  is represented as $\phi: G\cong\frac{F}{R}$ with $R$ normal in the
free group $F$. Then $X
\cong \frac{\hat{X}}{R},  Y\cong \frac{\hat{Y}}{R}$ where $\hat{X},
\hat{Y}$ are the images in $F$ of $X, Y$ respectively under 
$\phi$. Thus, in a sense, factors in free groups represent factors in
general groups.

The $n^{th}$ lower central factor of
$F$, $\frac{\gamma_n(F)}{\gamma_{n+1}(F)}$,  is well known to be the
free abelian group on the {\em basic
  commutators}  of weight $n$ formed from the free generators of
$F$; see for example [1] Chapter 4. Suppose then  $G \cong
\frac{F}{R}$ where $R$ is normal in the free group $F$.
The $n^{th}$ lower central factor of $G$ is 
$\frac{\gamma_n(G)}{\gamma_{n+1}(G)}$ and satisfies
$\frac{\gamma_n(G)}{\gamma_{n+1}(G)} \cong
\left(\frac{\gamma_n(F)}{\gamma_{n+1}(F)}\right)/ \left(\frac{R\cap \gamma_n(F)}{R
\cap \gamma_{n+1}(F)}\right)$.

Thus this general lower central factor is the factor group of the
 known (free) abelian 
 factor by the (free) abelian factor $\frac{R\cap \gamma_n(F)}{R \cap
 \gamma_{n+1}(F)}$.
 The structure of the $n^{th}$ lower central
 factors of $G$ is known once the structure of 
 $\frac{R\cap \gamma_n(F)}{R\cap  \gamma_{n+1}()}$ is known. 

Suppose now  $F$ is free on a finite set and $R$
 is  finitely generated as a
 normal subgroup -- that is, $G\cong \frac{F}{R}$ is finitely
 presented. The free structures of $\frac{R}{R\cap \ga_2(F)}$ and
 $\frac{R\cap \ga_2(F)}{R\cap \ga_3(F)}$ are determined, using in the
 latter case what we
 define as {\em relative basic commutators}. Relative (to
 $R$) basic
 commutators can be defined to study general $\frac{R\cap \gamma_n(F)}{R \cap
 \gamma_{n+1}(F)}$.

The free
 structure of $\ga_m (F)$ all the way down to $\ga_n (F)$ for $n > m$ 
is given in [3]. 

 The {\em Schur Multiplicator} of $G  $
is an  abelian factor  $\frac{R \cap \ga_2(F)}{[R,F]}$
 which  is independent of
the free presentation $G \cong \frac{F}{R}$ -- see for example [2]. 
The Multiplicator is isomorphic to
$\left(\frac{R\cap (\ga_2F)}{R'}\right)/ \left(\frac{[R,F]}{R'}\right)$. 
The free structure of 
 $\frac{R\cap \ga_2(F)}{R'}$ is determined. Generators for $\frac{R\cap
 \ga_2(F)}{[R,F]}$ are given 
in terms of free generators of
 $R$. 

\section{Abelian factors in free groups}

Suppose  $A,B$ are two sets where the union $A\cup B$ is
fully ordered  in a way that every member of $A$ precedes every member
of $B$.   Then the {\em `U-construction'} produces the set, $U =
U(A,B)$ say,  which consists of all words of the form
$$[b^\beta,a_1^{\alpha_1},a_2^{\alpha_2},
  \ldots,a_q^{\alpha_q}]$$ where $q \geq 1, b  \in A\cup B, a_i \in A,
  \text{the indices}\,  \beta,\alpha_i = \pm 1, b > a_1 \leq a_2 \leq
  \ldots \leq a_q \,  \, \text{and if} \, b\in B \Rightarrow \beta = +1$.  
  Further the indices are {\em index coherent} which means that if any
two of  the elements are equal then their indices are the same. 

See [3] for further details on such constructions. 

In other words  the {\em $U$-construction}  forms 
commutators of the type: $$[\frac{\,\,B^{+}}{A}, A, A, \ldots, A]$$
$A$ in a  position means that an element of $A\cup A^{-1}$ occurs in
that position; $B^{+}$ means an element of $B$ (with positive sign) 
may occur in that
position -- the  entries are ordered as  normally expected in a
commutator and in addition there is  the condition that equal entries have the
same sign.


Note:  In the case where $B = \phi$, the
empty set, we  actually do get something useful, namely a free
generating set  for the the derived group  of the group generated by
$\{A\}$,  (when $A$ itself is independent). 

There are a number of equivalent  constructions -- see 
[3].

\begin{theorem}\label{thm:one} If $\frac{X}{Y}$ is a free abelian factor then
  there exists a free basis $A \cup B$ for $X$ such that $B \cup U$ is
  a free  basis for $Y$ where the set $U$ is the  $U$-construction set
  formed  from $A\cup B$.
\end{theorem}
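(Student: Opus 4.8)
The plan is to first put the inclusion $Y\trianglelefteq X$ into a normal form by choosing the basis $A\cup B$ to be \emph{adapted} to the free abelian quotient, and then to identify $Y$ with the subgroup $\langle B\rangle X'$ and produce its basis by a collecting process, reducing the general situation to the derived-group case $B=\phi$ recorded in the Note and in [3]. Throughout, $X$ is free by Nielsen--Schreier, so everything in sight is a free group.

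Since $\frac{X}{Y}$ is abelian, the derived group $X'=[X,X]$ lies in $Y$ and the quotient map factors through the free abelian group $X/X'$. Writing $\bar Y = Y/X' \sub X/X'$, the hypothesis that $\frac{X}{Y}\cong (X/X')/\bar Y$ is free abelian makes $\bar Y$ a direct summand of $X/X'$. Choosing a basis $\bar B$ of $\bar Y$ and a complementary basis $\bar A$, a sequence of Nielsen transformations of a free basis of $X$ --- mirroring the reduction of the integral matrix of $X\to X/Y$ to Smith normal form --- produces a free basis $A\cup B$ of $X$ with the images of $A$ a basis of $X/Y$ and $B\sub Y$. A short abelianization check then gives $Y=\langle B\rangle X'$: the inclusion $\supseteq$ is clear, and any $w\in Y$ has trivial $A$-component in $X/X'$, so $w\in\langle B\rangle X'$.

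The heart of the argument is to show that $\langle B\rangle X'$ is free on $B\cup U(A,B)$. Here I would use the retraction $\pi:X\to\langle A\rangle$ that kills $B$ and fixes $A$. It restricts to a surjection $Y\to\langle A\rangle'$ whose kernel is the normal closure $\langle\!\langle B\rangle\!\rangle$ of $B$ in $X$. By the Note (the $B=\phi$ case, see [3]) $\langle A\rangle'$ is free on $U(A,\phi)$, and these commutators lie in $\langle A\rangle\sub X$, so they are fixed by $\pi$ and furnish a section; note that $U(A,\phi)$ is exactly the part of $U(A,B)$ whose first entry lies in $A$. It then remains to produce a free basis of the complementary free group $\langle\!\langle B\rangle\!\rangle$, and the claim is that the collecting process on $A\cup B$ rewrites its natural conjugate generators $\{w^{-1}bw:\ b\in B,\ w\in\langle A\rangle\}$ into $B$ together with the commutators $[b,a_1,\ldots,a_q]$ with $b\in B$, $a_i\in A$ and $b>a_1\le\cdots\le a_q$ --- precisely the part of $U(A,B)$ with first entry in $B$. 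Assembling the two pieces exhibits $B\cup U(A,B)$ as a generating set for $Y$, and the ordering and index-coherence conditions are exactly what remove all redundancy.

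I expect the main obstacle to be freeness: showing that the collected set $B\cup U(A,B)$ carries no hidden relations rather than merely generating $Y$. The plan for this is to run the collecting process as a genuine rewriting with a termination argument by commutator weight, so that every element of $Y$ acquires a unique collected normal form in the proposed generators; equivalently, one compares with the known free basis $U(A\cup B,\phi)$ of $X'$ supplied by [3] and checks that the change of generating sets $B\cup U(A,B)\leftrightarrow B\cup U(A\cup B,\phi)$ is invertible over $\langle B\rangle$. Controlling those basic commutators that acquire $B$-entries in non-initial positions, and confirming that the process disposes of each one using only the generators in $B$ together with strictly lower commutators, is the step that will demand the most care.
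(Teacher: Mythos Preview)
Your plan is sound and ultimately rests on the same three ingredients the paper uses, but you organise them differently. The paper is more direct: after producing the adapted basis $A\cup B$ (essentially as you do, though it treats the infinitely generated case by a separate maximality argument rather than Smith normal form), it simply \emph{cites} [3, Theorem~2.1] for the independence of $B\cup U$, and then runs a single collecting process on an arbitrary $y\in Y$ --- collect only the $A$-letters to the front, so that $y=a_1^{\alpha_1}\cdots a_r^{\alpha_r}\cdot w(B,U)$, and then use the independence of $A$ modulo $Y$ to force every $\alpha_i=0$. There is no retraction, no explicit identification $Y=\langle B\rangle X'$, and no splitting into $\langle A\rangle'$ versus $\langle\!\langle B\rangle\!\rangle$.

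Your route inserts the retraction $\pi:X\to\langle A\rangle$ as a conceptual scaffold, decomposing the generation problem into the known $B=\phi$ case for $\langle A\rangle'$ and a Reidemeister--Schreier description of $\langle\!\langle B\rangle\!\rangle$. This buys a clearer explanation of \emph{why} $U(A,B)$ naturally breaks into the two pieces (first entry in $A$ versus first entry in $B$), but it costs an extra step you correctly flag: a free basis of the kernel together with one for a section of a split extension do not automatically assemble into a free basis of the total group, so you still need the collecting/normal-form argument or the comparison with $B\cup U(A\cup B,\phi)$ to get freeness --- which is precisely what the paper outsources to [3]. Both approaches are correct; the paper's is shorter because it does not attempt to re-prove independence, while yours is more self-contained and structurally transparent.
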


Interpret ``$=: $'' as ``has as free basis''  and then 
the theorem can be visualised  as follows: $$\begin{array}{rrrrrrr}  X
  & =: & A &\cup & B & \,&\, \\ Y  & =: & \, & \, & B &\cup &
  [\frac{\,\,B^{+}}{A}, A, A, \ldots, A]$$\end{array}$$ 





\thmref{two} is similar but applies in more general to a factor 
$\frac{X}{Y}$ which  is a finitely generated abelian group and not just
a free abelian group.

Suppose we are given sets $A_1, A_2, B$ with $|A_1| = r$ and  positive
integers $\Gamma = \{\gamma_1, \gamma_2, \ldots
\gamma_r\}$. We now form the {\em restricted U-construction}, $RU =
RU(A_1,A_2,B, \Gamma )$ on $(A_1, A_2, B)$.

The restricted $U$-construction is similar to the normal $U$-construction
except   now we restrict the  number of occurrences in a commutator of
an element in  $A_1$ .

$RU$ consists of all commutators of the following
form. $$[b^\beta,a_1^{\alpha_1}
  ,a_2^{\alpha_2},\ldots,a_q^{\alpha_q}]$$ where $q \geq 1, b \in
A\cup  B, a_i \in A, \, \,\text{the indices}\,\, \beta,\alpha_i = \pm
1,  \, b > a_1 \leq a_2 \leq \ldots \leq a_q \,\,\text{and if}\,\,b\in
B  \Rightarrow \beta = +1$. 

Further the indices are {\em index coherent} which means that if any
two  of the elements are equal then their indices are the same. {\em
  Further if  $x_i$ $\in A_1$  occurs in the commutator its
  length  (= the number of  times it occurs) is $\leq
  \frac{1}{2}\alpha_i$  and if its length is equal to
  $\frac{1}{2}\alpha_i$  then its exponent is $+1$.}

Define $\hat{A_1} = \{x_1^{\alpha_1}, x_2^{\alpha_2},
\ldots,x_r^{\alpha_r} \}$ and $\hat{B} = B \cup \hat{A_1}$. 




\begin{theorem}\label{thm:two} If $\frac{X}{Y}$ is a finitely generated abelian
factor 
which is the direct product of the cyclics $C_{\gamma_i}$ generated by
  $x_i$ for  $1 \leq i \leq r$
  and $t$ infinite cyclic groups generated by $x_j$ for $ r < j \leq
  r+t$ and $\gamma_i / \gamma_{i+1}$ for $ 1 \leq i \leq r-1$.
 Then  there also exists a free basis $A \cup B$ for $X$ with  $A =
  A_1  \cup A_2$ such that $  \hat{B} \cup RU$ is a free
  basis  for $Y$ where the set $RU = RU(A_1,A_2,B,\Gamma)$ is the restricted U-construction
  formed from $(A_1, A_2, \hat{B})$. 
\end{theorem}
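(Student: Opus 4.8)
The plan is to deduce Theorem~\ref{thm:two} from Theorem~\ref{thm:one} by interposing the free abelianisation of $\frac{X}{Y}$ and then running a commutator rewriting that trims the unrestricted set $U$ down to the restricted set $RU$. We may assume the cyclic decomposition is in invariant-factor form (the divisibility $\gamma_{i+1}\mid\gamma_i$), although the argument is insensitive to the particular decomposition chosen.

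First I would fix lifts $x_1,\dots,x_{r+t}\in X$ of the given cyclic generators and let $Y_0\sub Y$ be the kernel of the homomorphism $X\to\mathbb{Z}^{r+t}$ sending each $x_i$ to the $i$-th standard generator. Then $\frac{X}{Y_0}$ is free abelian, $Y_0\sub Y$, and $\frac{Y}{Y_0}\cong\gamma_1\mathbb{Z}\times\cdots\times\gamma_r\mathbb{Z}$ is free abelian of rank $r$ on the classes of $x_1^{\gamma_1},\dots,x_r^{\gamma_r}$. Applying Theorem~\ref{thm:one} to the free abelian factor $\frac{X}{Y_0}$ — in the form in which its proof produces $A$ as a set of lifts of a chosen basis, so that $A=A_1\cup A_2$ with $A_1=\{x_1,\dots,x_r\}$ the torsion lifts and $A_2$ the free ones, and $B\sub Y_0$ completing it — yields a free basis $B\cup U$ for $Y_0$ with $U=U(A,B)$. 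Since $Y=\langle Y_0,x_1^{\gamma_1},\dots,x_r^{\gamma_r}\rangle$ and each $x_i^{\gamma_i}$ lies in $Y\setminus Y_0$ mapping to a free generator of $\frac{Y}{Y_0}$, the set $\hat B\cup U$, where $\hat B=B\cup\hat{A_1}$, already generates $Y$; the assertion of the theorem is that $U$ may be replaced by $RU$.

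The heart of the matter is this replacement. Once $x_i^{\gamma_i}$ is admitted as a genuine free generator of $Y$ rather than being trivial, every $U$-commutator in which a torsion generator $x_i$ occurs more than $\tfrac12\gamma_i$ times can be rewritten, by the collecting process of [3] and the usual commutator identities that trade $x_i^{-1}$-entries for $x_i$-entries, in terms of $\hat{A_1}$ and of commutators in which $x_i$ occurs at most $\tfrac12\gamma_i$ times — the surplus being absorbed into the new generator $x_i^{\gamma_i}$, which reappears either as a leading entry (this is why $RU$ is formed from $(A_1,A_2,\hat B)$, allowing $b\in\hat{A_1}$) or as an explicit factor. This is exactly where the two clauses of the index restriction come from: reducing the exponent-sum of each $x_i$ modulo $\gamma_i$ forces the choice of one representative per nonzero residue class, realised by the shorter of a block of $x_i^{+1}$ or a block of $x_i^{-1}$, with the self-inverse boundary case of length $\tfrac12\gamma_i$ (for $\gamma_i$ even) resolved in favour of the exponent $+1$. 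Carrying this out for each $i$ in turn transforms the generating set $\hat B\cup U$ of $Y$ into $\hat B\cup RU$.

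Finally I would check that $\hat B\cup RU$ is not merely a generating set but a free basis. For this I would re-run the independence argument underlying Theorem~\ref{thm:one} — the basic-commutator and collecting machinery of [3] — now relative to the filtration of $X$ by $x_i$-occurrence count, verifying that the images of the $RU$-elements in the successive factors are independent and that the rewriting above introduces no relation. I expect this independence verification, rather than the generation step, to be the principal obstacle: it is here that one must show the halving-with-sign convention selects a genuinely free, and not merely generating, system, and the rank computation $\operatorname{rk}(\frac{Y}{Y_0})=r$ together with the abelian structure of $\frac{X}{Y}$ is what keeps the bookkeeping exact.
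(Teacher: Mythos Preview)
Your route genuinely differs from the paper's. The paper does not interpose an intermediate $Y_0$ or invoke Theorem~\ref{thm:one}; it works directly with $X/Y$ in three steps. Step~1 constructs a free basis $A_1\cup A_2\cup B$ of $X$ with $A_1$ mapping to the torsion generators, $A_2$ to the free ones, and $B\subset Y$; the divisibility $\gamma_i\mid\gamma_{i+1}$ is used here to replace each $x_i$ in turn by an honest free generator of $X$ of the same order modulo $Y$. Note that this step is exactly the existence of the ``good lifts'' your construction of $Y_0$ presupposes: for arbitrary lifts there need be no homomorphism $X\to\mathbb{Z}^{r+t}$ sending $x_i\mapsto e_i$, so your opening move already hides the paper's Step~1, and your remark that the argument is insensitive to the chosen decomposition is not quite right.

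For generation and independence the paper's key device is the conjugate-form $Z$-construction rather than the commutator-form $U$. Collection is carried out directly on words in $A\cup B$: collecting $x_i\in A_1$ produces conjugates $b^{x_i^{k}}$, and whenever $|k|$ would exceed $\tfrac12\gamma_i$ one inserts a factor $x_i^{\pm\gamma_i}\in\hat B$ to pull the exponent back into range, yielding $p=x_1^{\alpha_1}\cdots x_{r+t}^{\alpha_{r+t}}\cdot w(\hat B,\hat Z)$ with $0\le\alpha_i<\gamma_i$. For independence --- the step you rightly flag as the obstacle --- the paper observes that the restricted set $\hat B\cup\hat Z$ is Nielsen reduced: the bound on the length of each $x_i$-block is precisely what prevents cancellation in a product $z^{\pm1}z_1^{\pm1}$ from reaching the central significant factor. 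Freeness of $\hat B\cup\hat Z$ follows immediately, and the (restricted) equivalence $\hat Z\sim\hat U=RU$ transfers it. Your proposed ``re-run the independence argument relative to a filtration by $x_i$-occurrence count'' is much vaguer than this; the concrete idea you are missing is the passage through the $Z$-form, in which both the exponent reduction modulo $\gamma_i$ and the Nielsen-reduced structure are visible at the level of reduced words. What your modular approach via $Y_0$ would buy, if completed, is a clean reduction of the generation statement to Theorem~\ref{thm:one}; what it costs is that independence no longer comes for free, and you end up needing essentially the paper's $Z$-argument anyway.
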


We can   
visualise these theorems as follows: Let $\frac{X}{Y}$ be abelian in
the free group $F$. Then: $$\begin{array}{rrrrrrr}  X  & =: & A & \cup
  & B & \,&\,  \\ Y  & =: & \, & \, & B &\cup&  C $$\end{array}$$ 

The set $C$ is of course different in the two theorems. In
\thmref{one}, 
$\frac{X}{Y}$ may be infinitely generated.


A collecting process in free groups is also presented which has
independent interest; the process is set up within the proofs of the theorems. 
\subsection{Proofs}

{\bf \thmref{one}}: {\em If $\frac{X}{Y}$ is a free abelian factor then
  there exists a free basis $A \cup B$ for $X$ such that $B \cup U$ is
  a free  basis for $Y$ where the set $U$ is the  $U$-construction set
  formed  from $A\cup B$.} 

\begin{proof}
To prove \thmref{one} we proceed in three stages:
\begin{enumerate}
\item $X$ has a free generating set $A\cup B$ where $A$ freely
 generates $\frac{X}{Y}$  and each element of $B$ is 
in $Y$.
\item $B \cup C$ is independent.
\item $B \cup C$ generates $Y$.
\end{enumerate} 

We show item 1 initially  when $\frac{X}{Y}$ is finitely
generated. Suppose then $\frac{X}{Y}$ is free abelian on 
$x_1, x_2, \ldots , x_r$.

Then in terms of a free basis $y_1, y_2, \ldots  $ for $X$ we can
write, using only a finite number of  the free generators,  
$$x_i \equiv y_1^{\alpha_{i,1}}y_2^{\alpha_{i,2}} \ldots
y_k^{\alpha_{i,k}}  \mod X' \sub Y$$
where the $\alpha_{i,j} \in \Z$. Then by a series of change of free
(abelian)  variables for $\frac{X}{Y}$ and free variables for 
$X$ we may assume there exists a free abelian basis $x_1, x_2, \ldots
,  x_r$ for $\frac{X}{Y}$ and a free basis for
$X$ so that $x_i \equiv y_i^{\alpha_i} \mod X'$ with $\alpha_i \geq
0$. See [2] Chapter 3. 

Now no $\alpha_i $ can be $0$ as $\frac{X}{Y}$ cannot be generated 
by less than $r$ elements. Also since each $y_i$ can be written in
terms  of the $x_j$ modulo $Y$ it also follows that none of the  
$\alpha$ can be greater than 1. Thus $T = \{y_1, y_2, \ldots, y_r\}$ freely 
generates $\frac{X}{Y}$ and $T$ is part of a free basis, $Q$ say, for $X$.


If $y \in Q, y\notin T$  then $y
\equiv t_1^{\alpha_1}t_2^{\alpha_2} \ldots t_r^{\alpha_r} \mod Y $ with the
$t_i \in T$. Now replace $y$ by  $(t_1^{\alpha_1}t_2^{\alpha_2} \ldots
t_r^{\alpha_r})^{-1}y $ and we see that we can assume that $y \in Y$.

Consider now the case when $\frac{X}{Y}$ is infinitely (countably)
generated.  Choose $T =
t_1, t_2, \ldots, t_i, \ldots $ maximal so that $T$ has the property
  that it is part of a free (abelian) basis for $\frac{X}{Y}$ and is part of a
  free basis, $Q$ say,  for $X$. 

If $T$ freely generates $\frac{X}{Y}$ then we are done. Otherwise we have a
set $T \cup x$ which is part of a free generating set for $\frac{X}{Y}$. Now
  modulo $X'$, $x \equiv y_1^{\alpha_1}y_2^{\alpha_2} \ldots y_s^{\alpha_s}$
  with the $y_i \in Q$  and the $\alpha_i \in \Z$. By changing the
  free generator $x$ of
  $\frac{X}{Y}$ we may assume that none of the elements of $T$ occur in the
  expression for $x$. Then by changes of variables we may assume $x
  \equiv y^\alpha \mod Y$, $\alpha \geq 1, y\notin T$ and  with $T \cup y$ part
  of a free generating set for
  $X$. Since $y$ may be written in terms of the free generators of
  $\frac{X}{Y}$ it is clear that $\alpha$ must be 1. Thus $T \cup y$ is part
  of a free basis for both $\frac{X}{Y}$ and for $X$. 

As with the finitely generated case we may assume, by changing
variables if necessary, that each element of the free generators which
is not in $T$ is in $Y$: If $y \in Q, y\notin T$  then $y
\equiv t_1^{\alpha_1}t_2^{\alpha_2} \ldots t_r^{\alpha_r} \mod Y $ with the
$t_i \in T$. Now replace $y$ by  $(t_1^{\alpha_1}t_2^{\alpha_2} \ldots
t_r^{\alpha_r})^{-1}y $ and we see that we can assume that $y \in Y$.

That $B \cup C$ is independent is shown in [2] Theorem 2.1. We now need to
show  that $B \cup C$ generates $Y$.  To do this we introduce a {\em
  collecting  process} on free generators. 

Suppose $y \in X$ then $y = w(A,B)$, a word
in  $A$ and $B$. In this word {\bf collect elements of $A$ only}.  Then
what happens is the uncollected piece consists of a word in $B$ and
$C$.  Thus we show that:$$y = a_1^{\alpha_1}a_2^{\alpha_2}\dots
a_r^{\alpha_r} \times w(B,C)\,\,\,(**)$$ where the $a_i$ are in $A$ and
the $\alpha_i$ are in $\mathbb{Z}$.  Now if $y\in Y$, $w(B,C) \in Y$
and  the elements of $A$ are independent modulo $Y$, it follows that
all  the $\alpha_i$ are $0$ and thus $y = w(B,C)$.

We now need to show that $y$ can be written in the form $(**)$. Suppose
$b \in B, x\in A\cup A^{-1}$. Then $$bx = xb[b,x]$$ and $$b^{-1}x
=x[b,x]^{-1}b^{-1}$$  These are the fundamental collection formulae.

We may assume that $A$ is finitely generated since we are only
considering  a finite number of elements of $A$ in the expression for
$y$. Set $A = \{a_1, a_2, a_3, \ldots a_n\}$. Proceed by induction on
$n$ to show that $y$ is a product of elements of the required form.

By induction we may assume that $y$ has the form  $$y =
a_1^{\alpha_1}a_2^{\alpha_2}\dots  a_{n-1}^{\alpha_{n-1}} \times
w(B',C')\,\,\,(**)$$where now $B' = B\cup a_n$, $A' = A - \{a_n\}$,
and  $C'$ is the set of elements obtained by performing the $U$
construction  on $(A', B')$. We now collect $a_n$. 

Suppose $c\in C'$, $c$ does not contain $a_n$ (which means $c$ does
not begin with $a_n$) and $a \in \{a_n, a_n^{-1}\} $. Then $ca  =
ac[c,a]$ and $c^{-1}a =  a[c,a]^{-1}c^{-1}$.  Then $[c,a] \in  C$.
Suppose now $c\in C$ with $c$ containing an element of $B$, $a$ as
before  and where now we allow  $c$ to end in $a_n^{\pm 1}$. If the
last  entry of $c$ has the same  sign as $a$ then $[c,a] \in C$. If
last  entry of $c$ has different  sign to $a$ then $c = [c',a^{-1}]$
and $[c,a] =  [c',a^{-1},a] = [c',a^{-1}]^{-1}[c',a]^{-1}$. 
Now $[c',a^{-1}]$ is in a word in $B,C$ and we
then proceed by induction on
the number of  occurrences of $a^{-1}$ in $c$ to show that  $[c',a]$ is a
 product of  elements in  $C$. 

If $c \in C'$ contains $a_n$ or if $c \in C$ with $c$ not involving an
element  of $B$ then $ca
=ac[c,a]$  and $[c,a]$ is now in the commutator subgroup of the group
generated by $A$ and is thus a product of elements of $C$. This
completes  the proof.
\end{proof}
\quad \quad

Suppose now $\frac{X}{Y}$ is a finitely generated abelian section
which is the direct product of cyclic groups $C_{\gamma_1},
C_{\gamma_2},\ldots , C_{\gamma_r}$ and of $t$ infinite cyclic groups,
where $C_{\gamma_i}$ has order  $\gamma_i$ for $ 1 \leq i\leq r$ and
such that $\gamma_1/\gamma_2/\ldots /\gamma_r$.

{\bf \thmref{two}}: {\em  If $\frac{X}{Y}$ is a finitely generated abelian  section
which is the direct product of the cyclics $C_{\gamma_i}$ generated by
  $x_i$ for  $1 \leq i \leq r$
  and $t$ infinite cyclic groups generated by $x_j$ for $ r < j \leq
  r+t$ and $\gamma_i | \gamma_{i+1}$ for $ 1 \leq i \leq r-1$.
 Then  there also exists a free basis $A \cup B$ for $X$ with  $A =
  A_1  \cup A_2$ such that $  \hat{B} \cup RU$ is a free
  basis  for $Y$ where the set $RU = RU(A_1,A_2,B,\Gamma)$ is the restricted U-construction
  formed from $(A_1, A_2, \hat{B})$.}

\begin{proof}
 
To prove this theorem we need:
\begin{enumerate}
\item $X$ has a free generating set $A_1 \cup A_2 \cup B$ where $\frac{X}{Y}$ is
 the direct product of the cyclic groups $C_{\gamma_i}$ with 
 $C_{\gamma_i}$  generated by $x_i \in A_1$, $t$ infinite
 cyclics generated by the elements of $A_2$,  and a set $B$ in which
 each element is in $Y$. 
\item $\hat{B}  \cup RU$ is independent.
\item $\hat{B} \cup RU$ generates $Y$.
\end{enumerate} 

Suppose then the torsion part of $\frac{X}{Y}$ is generated by  
$x_1, x_2, \ldots , x_r$ where $x_i$ has order $\gamma_i$.

Then $x_1 \equiv y_1^{\alpha_1}y_2^{\alpha_2} \ldots y_s^{\alpha_s} \mod X' \sub
Y$  with the $y_i$ in a free generating set for $X$. Then by change of
the variables for $X$ we may assume $x_1 \equiv y^\alpha \mod X' \sub
Y$.

Now $y \equiv x_1^{\delta_1}x_2^{\delta_2}\ldots x_r^{\delta_r} \times a$
with $a$ in the torsion free part of $\frac{X}{Y}$. Putting these together we
get that $x_1 \equiv y^\alpha \mod Y$ and $y \equiv x_1^\delta \mod Y$. From
this it is deduced that $x_1$ and $y$ have the same order modulo $Y$
and that $x_1$ and $y$ generate the same subgroup modulo $Y$. We can
thus replace $x_1$ by $y$ as the generator of the cyclic group of
order $\gamma_1$.

Consider $x_2$ which has order $\gamma_2$. Now
  $x_2 \equiv y_1^{\alpha_1}y_2^{\alpha_2} \ldots y_s^{\alpha_s} \mod
  X' \sub Y$ . Now by replacing $x_2$ by $(y_1^{\alpha_1})^{-1}* x_2$, which also
  has order precisely $\gamma_2$ since $\gamma_1 | \gamma_2$ we see
  that  we may assume that the
  $y_1$ does not appear in this expression for $x_2$. Then as for the
  case $x_1$ we may replace this $x_2$ by a free generator $y_2$ of
  $x$ which has also order $\gamma_2 \mod Y$.

We continue in this way to replace each $x_i, 1 \leq i \leq r$
 by a free generator of $X$ which has also order $\gamma_i$ modulo
 $Y$.

Let $T = \{y_1, y_2, \ldots, y_r\}$ where
$y_i$ generates $C_{\gamma_i}$ in $\frac{X}{Y}$.

Let $X$ then have basis $ Q = T \cup R$. 
The (free abelian) generators of $\frac{X}{Y}$ which have infinite order are
dealt with in the same manner as for the finitely generated free
abelian case above. Suppose we have $x_{r+k}$ of infinite order in
  $\frac{X}{Y}$. First of all write $x_{r+k}$ as a product of the free
generators $Q$ of  $X$ modulo $X'$. If any of the free
generators $y_i \in T$ occurs to the power of $\alpha$ in this
expression then replace $x_{r+k}$ by $y_i^{-\alpha}x_{r+k}$. This new element
also has infinite order and does not contain $y_i$ modulo $X'$. If it contains 
a power of $y_i$ modulo $Y$ then this power must be a multiple of $\gamma_i$; 
in this way we can
ensure that the element of infinite order do not contain any of the
free generators constructed which have finite order modulo $Y$. We
then proceed as for the finitely generated case in the free abelian
case.

Also if $x$ is in the free basis for $X$ which is not one of the $y_1,
\ldots y_r, y_{r+1}, \ldots y_{r+t}$ then as before a change of
variable will ensure this is in $Y$ (as it can be written as product
of the $y_i, 1 \leq i \leq r+t$ modulo $Y$).

Now  $RU = RU(A_1,A_2,B,\Gamma)$ denotes the restricted construction on $(A_1, A_2, \hat{B})$.
The next step is to show that $\hat{B} \cup RU$ is independent.

It is clear that $\hat{B}$ is independent (as $B \cup A_1$ is independent).

We refer to [2] where it is shown that $B \cup U$ is independent and is
also  equivalent to the set $B \cup Z$. 
We now show that every element of $Y$ can be written in terms of $RU$.
We show that for  $x \in X$ then 

$$x = x_1^{\alpha_1}x_2^{\alpha_2}\ldots
x_r^{\alpha_r}x_{r+1}^{\alpha_{r+1}}  \ldots
x_{r+t}^{\alpha_{r+t}}\times  w(\hat{B}, Z)$$ 

where $\alpha_j \in \Z$ with $0 \leq \alpha_i < \delta_i$ for $1 \leq
i  \leq r$.

 Let $F$ be a free group which contains a subset which 
is an
ordered disjoint union $A\cup B$. Several ways of constructing new subsets 
of
$F$ from $A$ and $B$ were defined in [2]. These are


The \lq\lq$Z$-construction\rq\rq\ produces the set, $Z$ say, 
which
consists of all words of one or other of the two forms
{
$$
b^{a_1^{\al_{1}}a_2^{\al_{2}}\ldots a_q^{\al_{q}}} =
a_q^{-\al_{q}}a_{q-1}^{-\al_{q-1}}\ldots a_1^{-\al_{1}}b
a_1^{\al_{1}}a_2^{\al_{2}}\ldots a_q^{\al_{q}}
$$
where $q\ge 1$, $b\in B$, the $a_i$ are members of $A$, each $\al_i=\pm 
1$,\break
$a_1\le a_2\le\ldots$ $\le a_q$ (note that $b>a_1$ is automatically true) 
and the
sequence $a_1^{\al_{1}},a_2^{\al_{2}},\ldots, a_q^{\al_{q}}$ is index-
coherent,}

or
{
$$
\left[b^\b,a_1^{\al_{1}}a_2^{\al_{2}}\ldots
a_p^{\al_{p}}\right]^{a_{p+1}^{\al_{p+1}}a_{p+2}^{\al_{p+2}}\ldots
a_q^{\al_{q}}} =  a_q^{-\al_{q}}a_{q-1}^{-\al_{q-1}}\ldots
a_{p+1}^{-\al_{p+1}}b^{-\b}a_p^{-\al_{p}}a_{p-1}^{-\al_{p-1}}\ldots
a_1^{-\al_{1}}b^\b a_1^{\al_{1}}a_2^{\al_{2}}\ldots a_q^{\al_{q}}
$$
where $1\le p\le q$, $b$ and the $a_i$ are members of $A$, $\b$ and each
$\al_i=\pm 1$, $b>a_1\le a_2\le\ldots\le a_p<b\le a_{p+1}\le 
a_{p+2}\le\ldots\le
a_q$ and the sequence $a_1^{\al_{1}},a_2^{\al_{2}},\ldots, a_q^{\al_{q}},b^\b$ 
is
index-coherent.}

The first construction produces the set $Z_1$ and the second produces
$Z_2$ and then $ Z = Z_1 \cup Z_2$.

We have already seen the $U$ construction which is as follows.

The \lq\lq$U$-construction\rq\rq\ produces the set, $U$ say, 
which
consists of all words of the form
{
$$
\left[b^\b,a_1^{\al_{1}},a_2^{\al_{2}},\ldots,a_q^{\al_{q}}\right]
$$
where $q\ge 1$, $b\in A\cup B$, the $a_i$ are members of $A$, $\b$ and 
each
$\al_i=\pm 1$, $b>a_1\le a_2\le\ldots\le a_q$, the sequence
$a_1^{\al_{1}},a_2^{\al_{2}},\ldots,a_q^{\al_{q}},b^\b$ is index-coherent and
$b\in B\ra\b=+1$.}

It is shown in [2, Theorem 2.1] that the sets $B \cup Z$ and $B \cup U$ are equivalent and if
  $A \cup B$ is independent then so is $B \cup U$.

We now consider the case where $A = A_1 \cup A_2$ and $\hat{B} = B
\cup \hat{A_1}$ and form the restricted $Z$ and $U$
constructions.  
Thus if $x_i \in A_1$ and $\Gamma = \gamma_1, \ldots
,\gamma_r$ then the number of occurrences of $x_i \leq \gamma_i$ and
if then number is actually equal to $\gamma_i$ then the sign of $x_i$
is $+1$. 

Denote the sets produced by the restricted $Z$ and $U$ constructions
by  $\hat{Z}$ and $\hat{U}$ respectively.

It is
clear that $B \cup Z$ is independent (whether or not $Z$ is
restricted).  (We have already noted that $\hat{B}$ is independent.)
It  is thus sufficient to show that $\hat{A_1} \cup Z$
when $Z$  is restricted is independent. But this is clear as this set
is Nielsen reduced - the restriction ensures that cancellation does
not proceed so as to involve the central significant factor. 

Theorem 2.1 of [3] may then be modified to show that $\hat{B} \cup
\hat{Z}$ and $\hat{B} \cup \hat{U}$ are equivalent. 

We now need to show that $Y$ is generated by $\hat{B} \cup
\hat{Z}$. It follows immediately that $Y$ is generated by $\hat{B}
\cup \hat{U}$
and that each of these sets freely generate $Y$.   
We do this by a collection process on $A \cup B$. We show that if $x
\in X$ then 

$$ x = x_1^{\alpha_1} x_2^{\alpha_2} \ldots
x_{r+t}^{\alpha_{r+t}}\times w(\hat{B}, \hat{Z})$$ with $\alpha_i \in
\Z$ where $0 \leq
\alpha_i < \gamma_i$ for $1 \leq i \leq r$ and $w(\hat{B}, \hat{Z})$ is
a word in $ \hat{B} \cup \hat{Z}$

Then if $x \in Y$ all the $\alpha_i$ must be $0$.

Initially $x =
w(A,B)$.   We collect elements of $A$ only but in a restricted
manner. First  we collect elements of $A_1$.  

Suppose $cx^\ep$ occurs with $c > x$ and $x$ has to be collected.
The fundamental collection here is 

$$ cx^ep = xb^{x^\ep}$$

Suppose $ c= b^{x^\de}$ and then we get $b^{x^\de}x^\ep = x^\ep
b^{x^{\de + \ep}}$. Suppose now  $x \in A_1$ is of pseudo-order $\gamma$. 
If $|\de + \ep | < \frac{1}{2} \ga$ then this finishes collection. If
$|\de + \ep | \geq  \frac{1}{2}\ga $ then $ x^\ep
b^{x^{\de + \ep}} = x^ep x^{-\ga} b^{x^{-\ga + \de + \ep}}x^\ga$ when
$\de + \ep > 0$ and   $ x^\ep
b^{x^{\de + \ep}} = x^ep x^{\ga} b^{x^{\ga + \de + \ep}}x^{-\ga}$ when
$\de + \ep \leq 0$. 
In all cases we ensure that the power of $x$ in the conjugate of $b$
occurs less than or equal to $\1 \ga $ and if equal to $\1 \ga$ then
it has positive power.

If $b = x^\alpha$  with $x$ to be collected then
first of all $b = x^\alpha = x[x,\alpha]$. Then $x$ is collected and
it is collected over $[x,\alpha]$ to give elements of $Z_2$ and
consequently elements of $Z_2$ when further elements of $A$ are
collected.

When $x \in A_1$ is fully collected it occurs in the front of elements
of $Z$ in the from $x^\alpha$ with $|\alpha| \leq \frac{1}{2} \ga$ and
if equal to $\frac{1}{2}\ga$ then $\alpha > 0$, (where $\ga$ is the
pseudo-order of $x$).  We now ensure that $x$
occurs in the form $x^\alpha $ before the elements of $Z$  with $ 0
\leq \alpha < \de$ by $x^\alpha = x^{\ga + \alpha}x^\ga$ when $\alpha
< 0$ (and $x^\ga \in Z$).

Thus if $p$ in $X$ then 

$$p = x_1^{\alpha_1}x_2^{\alpha_2}\ldots x_{r+t}^{\alpha_{r+t}}\times
w(\hat{B},\hat{Z})$$
with $0 \leq \alpha_i < \ga_i$ when $x_i \in A_1$ has pseudo order
$\ga_i$.

Then if $p \in Y$ it follows that all the $\alpha_i = 0$ and $p$ is a
word in  $\hat{B}, \hat{Z}$ as required. 
\end{proof}


It is also in certain cases  possible to go constructively down abelian
sections  to get to a group:  for example we could study  {\em metabelian
  section} $\frac{X}{Z}$ if we know the abelian sections $\frac{X}{Y}$
and $\frac{Y}{Z}$.  Having worked on $X$ modulo $Y$ we then   look
at $Y$  modulo $ Z$.The processes are inductive so for example when
 a series of factors are finitely generated it is in theory
possible to work from the top group all the way down. 


\subsection{Lower central factors}\label{sec:five}


Suppose $F$ is freely generated by the finite set $X$ 
and that $R$ is generated as a normal subgroup by
 $ A = \{r_1, r_2,\ldots, r_m\}$. 
\begin{lemma} There exists a set  of free generators
$x_{1}, x_{2}, \ldots , x_{n}$ for $F$ and a set of free  
generators \\$w_1, w_2, \ldots, w_t, w_{t+1}, \ldots $ for $R$ such that
$w_i \equiv x_{i}^{d_{i}}  \mod \gamma_2F  $ for $1 \leq i
\leq t \leq n$ where $d_i \not = 0$ and $w_i \in \ga_2(F)$ for $ i > t$.
\end{lemma}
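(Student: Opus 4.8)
The plan is to reduce the whole problem to linear algebra over $\Z$ by passing to $F/\ga_2F$, and then to feed the result into \thmref{one}. First I would form the abelianisation $\pi\colon F \to F/\ga_2F \cong \Z^n$ and set $\bar R = \pi(R)$. As a subgroup of the finitely generated free abelian group $\Z^n$, the group $\bar R$ is free abelian of some rank $t \le n$. By the theorem on adapted (``stacked'') bases — equivalently, Smith normal form applied to the inclusion $\bar R \hookrightarrow \Z^n$ — there is a basis $\bar x_1,\dots,\bar x_n$ of $\Z^n$ and nonzero integers $d_1,\dots,d_t$ (which may be taken positive, with $d_1\mid d_2\mid\cdots\mid d_t$) such that $d_1\bar x_1,\dots,d_t\bar x_t$ is a basis of $\bar R$. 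This already isolates the integers $d_i\ne 0$ and the bound $t\le n$.

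Next I would lift $\bar x_1,\dots,\bar x_n$ to a free basis of $F$. The basis $\{\bar x_i\}$ differs from the image of the given basis of $F$ by a matrix $M\in GL_n(\Z)$; since the natural map $\mathrm{Aut}(F)\to GL_n(\Z)$ is surjective (every element of $GL_n(\Z)$ is a product of elementary and signed-permutation matrices, each realised by a Nielsen automorphism), I choose $\Phi\in\mathrm{Aut}(F)$ inducing $M$ and put $x_i = \Phi(s_i)$ for the given generators $s_i$. Then $x_1,\dots,x_n$ is a free basis of $F$ with $\pi(x_i)=\bar x_i$. I would stress that this automorphism-realised lift is essential: an arbitrary set-theoretic lift of a basis of $F^{\mathrm{ab}}$ need \emph{not} be a free basis of $F$ (for instance $a,\,b[a,b]$ in $F=\langle a,b\rangle$ reduces to a basis of $F^{\mathrm{ab}}$ yet is not a free basis), so one cannot merely pick preimages.

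Now I would produce the free generators of $R$. By Nielsen--Schreier $R$ is free, and by the second isomorphism theorem $R/(R\cap\ga_2F)\cong R\ga_2F/\ga_2F = \bar R$ is free abelian, so $\frac{R}{R\cap\ga_2F}$ is a free abelian factor in $F$. Applying \thmref{one} with $X=R$ and $Y=R\cap\ga_2F$ yields a free basis $A\cup B$ of $R$ in which $A=\{a_1,\dots,a_t\}$ freely generates $\frac{R}{R\cap\ga_2F}$ and every element of $B$ lies in $R\cap\ga_2F\sub\ga_2F$. In particular $\pi(a_1),\dots,\pi(a_t)$ is a basis of $\bar R$.

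Finally I would match the two bases of $\bar R$. The basis $\{\pi(a_i)\}$ and the basis $\{d_i\bar x_i\}$ of $\bar R$ differ by some $N\in GL_t(\Z)$; realising $N$ by a Nielsen transformation of the free set $A$ (again using surjectivity of $\mathrm{Aut}(F_t)\to GL_t(\Z)$) gives elements $w_1,\dots,w_t$ with $\pi(w_i)=d_i\bar x_i=\pi(x_i^{d_i})$, that is $w_i\equiv x_i^{d_i}\bmod\ga_2F$ with $d_i\ne 0$. Since this transformation only recombines the elements of $A$ and leaves $B$ fixed, $\{w_1,\dots,w_t\}\cup B$ is still a free basis of $R$; relabelling $B=\{w_{t+1},w_{t+2},\dots\}$ gives generators with $w_i\in\ga_2F$ for $i>t$, as required. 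The main obstacle is not any single hard step but the bookkeeping across the two matrix realisations — keeping the free basis $\{x_i\}$ of $F$ and the basis $\{d_i\bar x_i\}$ of $\bar R$ consistent through the application of \thmref{one} — together with the observation (illustrated above) that naive lifting of abelianised bases is illegitimate and must be replaced by automorphism-realised lifting.
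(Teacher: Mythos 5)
Your proof is correct, and it reaches the lemma by a genuinely different route from the paper's. The paper's argument is essentially a citation: it selects the finitely many free generators $\hat w_1,\dots,\hat w_s$ of $R$ that occur in the given normal generators $r_1,\dots,r_m$, invokes Gruenberg [2, Ch.~3, Thm~3.5] to diagonalise these simultaneously against a free basis of $F$ modulo $\gamma_2F$, and then uses the fact that the $r_i$ generate $R$ modulo $[R,F]\subseteq\gamma_2F$ to push every remaining free generator of $R$ into $\gamma_2F$ by a Nielsen move. You instead assemble the diagonalisation from standard facts --- Smith normal form applied to $\bar R\le F/\gamma_2F$, plus surjectivity of $\mathrm{Aut}(F)\to GL_n(\mathbb{Z})$ to lift the adapted basis to a free basis of $F$ (your warning that naive lifting fails is well taken: $[a,b[a,b]]$ is cyclically reduced of length $12$, hence not conjugate to $[a,b]^{\pm1}$, so $\{a,b[a,b]\}$ is indeed not a basis of $F_2$) --- and you obtain the splitting of the free basis of $R$ by applying the paper's own \thmref{one} to the factor $\frac{R}{R\cap\gamma_2F}$, finishing with a $GL_t(\mathbb{Z})$ Nielsen transformation of $A$ that leaves $B$ untouched. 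Each approach buys something: the paper's is shorter given the external reference and is phrased directly in terms of the normal generators $r_i$, in keeping with how the surrounding sections are organised; yours is self-contained relative to the paper's own Theorem~1 and, notably, never uses that $R$ is finitely generated as a normal subgroup --- only that $F$ has finite rank, which already forces $\bar R$ to be finitely generated --- so it establishes the lemma under a weaker hypothesis.
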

\begin{proof} Let $\hat{w}_1, \hat{w}_2, \ldots \hat{w}_s$ be the free generators for
$R$ involved in the expressions for $r_1, r_2, \ldots ,r_m$ as words
in the free generators of $R$. Then by [2], Chapter 3 (Theorem 3.5)
there is a set of free generators
$x_{1}, x_{2}, \ldots , x_{n}$ for $F$ and a set $w_1, w_2, \ldots,w_s$
Nielson equivalent to $\hat{w}_1, \hat{w}_2, \ldots , \hat{w}_s$
 such that
$w_i \equiv x_{i}^{d_{i}} \mod \gamma_2F  $ for $1 \leq i
\leq t \leq s$ where $d_i \not = 0$ and $w_i \in \ga_2(F)$ for $ i > s$.
Let $w_1, w_2, \ldots, w_s, w_{s+1}, \ldots  $ denote the free
generators of $R$. Then $w_{s+i}$ for $i \geq 1$ is a word, say
$w(s+i)$, in $w_1, w_2,
\ldots , w_s \mod \ga_2(F)$ since $r_1, r_2, \ldots r_n$ generate $R
\mod [R,F]\subset \ga_2(F)$. Thus  replacing $w_{s+i}$ by
$w_{s+i}w(s+i)^{-1}$ in the free generating set for $R$ 
we may assume $w_{s+i} \in \ga_2(F)$.
\end{proof}


Let $W_1 = w_1, w_2, \ldots, w_t $ and $W_2 = w_{t+1}, \ldots $. Then:

\begin{theorem} $R$ has free basis $W_1 \cup W_2$ and $R\cap \ga_2(F)$
has free basis $W_2 \cup U$ where $U$ is the $U$-construction on 
$W_1\cup  W_2$.
\end{theorem}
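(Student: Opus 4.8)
The plan is to read the first assertion straight off the Lemma and to deduce the second by applying \thmref{one} to the factor $\frac{R}{R\cap\ga_2(F)}$. The first assertion needs no work: the Lemma already produces free generators $w_1,w_2,\ldots$ for $R$, and $W_1\cup W_2$ is merely this set relabelled, so it is a free basis for $R$.

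For the second assertion I would first confirm that \thmref{one} is applicable, that is, that $\frac{R}{R\cap\ga_2(F)}$ is a free abelian factor. Since $R$ and $\ga_2(F)$ are both normal in $F$, their intersection $R\cap\ga_2(F)$ is normal in $F$, hence in $R$. By the second isomorphism theorem $\frac{R}{R\cap\ga_2(F)}\cong\frac{R\,\ga_2(F)}{\ga_2(F)}$, a subgroup of $\frac{F}{\ga_2(F)}$; the latter is free abelian of rank $n$ on the images of $x_1,\ldots,x_n$, and a subgroup of a free abelian group is free abelian, so the factor is free abelian. I would then check that the basis $W_1\cup W_2$ is exactly a basis of the kind produced at Stage 1 of the proof of \thmref{one}, taking $A=W_1$ and $B=W_2$. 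Indeed $W_2\subseteq Y:=R\cap\ga_2(F)$ by the Lemma, while for $W_1$ each $w_i\equiv x_i^{d_i}\mod\ga_2(F)$ with $d_i\neq 0$; since the images of $x_1,\ldots,x_t$ lie in a free abelian basis of $\frac{F}{\ga_2(F)}$, the images of $w_1,\ldots,w_t$ are independent in $\frac{R}{R\cap\ga_2(F)}$, and because $R=\langle W_1\cup W_2\rangle$ with $W_2\subseteq Y$ they also generate this quotient. Hence $W_1$ maps onto a free abelian basis of $\frac{R}{R\cap\ga_2(F)}$.

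With Stage 1 secured for this particular basis, Stages 2 and 3 of the proof of \thmref{one} apply verbatim with $A=W_1$, $B=W_2$ and $U$ the $U$-construction on $W_1\cup W_2$: independence of $W_2\cup U$ comes from [2, Theorem 2.1], and generation of $Y$ comes from the collecting process that fixes the elements of $W_1$ and leaves an uncollected word in $W_2\cup U$. This yields $W_2\cup U$ as a free basis for $R\cap\ga_2(F)$. I expect the only delicate point to be the matching of the Lemma's basis to the Stage 1 hypothesis---in particular verifying that $W_1$ \emph{generates} the quotient, and does not merely inject independently into it---but this is immediate once $W_2$ is absorbed into $Y$ and the relation $R=\langle W_1\cup W_2\rangle$ is invoked.
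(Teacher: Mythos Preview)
Your proposal is correct and is exactly the argument the paper intends: the theorem is stated in the paper without a separate proof precisely because it is the specialisation of \thmref{one} to $X=R$, $Y=R\cap\ga_2(F)$ with the basis supplied by the Lemma, and you have verified the one point that needs checking---namely that $A=W_1$, $B=W_2$ already satisfy the conclusion of Stage~1 (so that Stages~2 and~3 apply to this particular basis rather than merely to some basis whose existence \thmref{one} asserts).
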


All the elements of $U$ except those of the form $[w_i^{\pm 1},w_j^{\pm
1}]$ with $w_i, w_j \in W_1$ are automatically in $\ga_3(F)$. Using
$[a^{-1},b] = [a,b]^{-1}[a,b,a^{-1}]$  and $[a,b^{-1}] =
[a,b]^{-1}[a,b,b^{-1}] $ we may replace $[w_i^{\pm 1},w_j^{\pm
1}]$ where one or both of the signs are $-1$ by
a free generator in $\ga_3(F)$. 

Let $\hat{W}$ be the set $\{[w_i,w_j]\} \in U$. Note that $[w_i,w_j]
\cong [x_i,\_j]^{d_id_j} \mod \ga_3(F)$ and that $[x_i,x_j]$ is a
basic commutator of weight $2$. Now  
 set $W = \hat{W} \cup \{w_{t+1}, \ldots \}$

Then there exists a set  $Q= q_1, q_ 2, \ldots $ equivalent to $W$ such that  
$q_i \cong b_i^{\al_i} \mod \ga_3(F)$ for $1 \leq i \leq s$, $\al_i
\not = 0$ where $\b_1, b_2, \ldots,b_s$ is equivalent to a set of $s$
basic commutators of weight $2$ and $q_i \in \ga_3(F)$. 

Set $Q_1 = q_1, q_2, \ldots , q_s$ and $Q_2 = q_{s+1}, \ldots, $.

Then:
\begin{theorem} $R\cap \ga_2(F)$ has free basis $Q_1 \cup Q_2$ and
$R\cup \ga_3(F)$ has free basis $Q_2 \cup \hat{U}$ where $\hat{U}$ is
the $U$-construction on $Q_1,Q_2$.
\end{theorem}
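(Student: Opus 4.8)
The plan is to recognise the statement as a second application of \thmref{one}, now one level further down the lower central series, with the r\^ole of $\frac{X}{Y}$ played by $\frac{R\cap \ga_2(F)}{R\cap \ga_3(F)}$ (reading the second basis as $R\cap \ga_3(F)$, the intersection, as in the previous theorem). First I would check that this factor is a free abelian group of finite rank: it is isomorphic to $\frac{(R\cap \ga_2(F))\ga_3(F)}{\ga_3(F)}$, a subgroup of $\frac{\ga_2(F)}{\ga_3(F)}$, and the latter is free abelian of finite rank on the basic commutators of weight $2$ (since $F$ is free of finite rank). A subgroup of a finitely generated free abelian group is again finitely generated free abelian, so $\frac{R\cap \ga_2(F)}{R\cap \ga_3(F)}$ is free abelian of some finite rank $s$, and \thmref{one} is applicable.

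Taking $X = R\cap \ga_2(F)$ and $Y = R\cap \ga_3(F)$, I would run stage~1 of the proof of \thmref{one} on this factor to produce a free basis of $R\cap \ga_2(F)$ split as $Q_1\cup Q_2$, where $Q_1$ freely generates $\frac{X}{Y}$ modulo $Y$ and every element of $Q_2$ lies in $Y$. To match the displayed normal form $q_i\cong b_i^{\al_i}\mod \ga_3(F)$ I would realise the change of variables through the stacked-basis (Smith normal form) theorem applied to the embedding $\frac{R\cap \ga_2(F)}{R\cap \ga_3(F)}\hookrightarrow \frac{\ga_2(F)}{\ga_3(F)}$: this yields a basis $b_1,\dots,b_s$ of the ambient group, equivalent to a set of basic commutators of weight $2$, together with integers $\al_i\neq 0$ such that $b_1^{\al_1},\dots,b_s^{\al_s}$ is a basis of the image of $\frac{X}{Y}$. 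Lifting back, the generators $q_i$ satisfy $q_i\cong b_i^{\al_i}\mod \ga_3(F)$, while the remaining $Q_2$ map to $1$ and hence lie in $\ga_3(F)\cap R = Y$. The natural starting point for this reduction is the basis $W_2\cup U$ for $R\cap \ga_2(F)$ supplied by the previous theorem, together with $[w_i,w_j]\cong [x_i,x_j]^{d_id_j}\mod \ga_3(F)$, which is exactly what places the images of $\hat W$ on powers of basic commutators. This simultaneously establishes the first assertion, that $Q_1\cup Q_2$ is a free basis of $R\cap \ga_2(F)$.

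Given a free basis $Q_1\cup Q_2$ of $R\cap \ga_2(F)$ with $Q_1$ generating $\frac{X}{Y}$ and $Q_2\sub Y$, the second assertion is now immediate from stages~2 and~3 of \thmref{one}. Those stages show that for \emph{any} such split free basis $A\cup B$ of $X$, the set $B\cup U'$ with $U'$ the $U$-construction on $A\cup B$ is independent and generates $Y$. Taking $A=Q_1$ and $B=Q_2$ gives that $Q_2\cup \hat U$ is a free basis for $Y=R\cap \ga_3(F)$, where $\hat U$ is the $U$-construction on $Q_1,Q_2$. In particular the collecting process developed in the proof of \thmref{one} applies verbatim, since it uses only that $Q_1$ is independent modulo $Y$ and that the uncollected piece is a word in $Q_1\cup Q_2$.

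I expect the main obstacle to be the change-of-variables step producing $Q_1$: one must confirm that the stacked-basis normalisation is induced by genuine Nielsen-type transformations of the free basis of $R\cap \ga_2(F)$, and not merely by an abstract isomorphism of the abelianised factor, so that $Q_1\cup Q_2$ is a free basis of the \emph{free} group $R\cap \ga_2(F)$ rather than only of the factor. This is precisely the content of stage~1 of \thmref{one} and of [2, Chapter~3], and it is where the hypothesis that the relevant factor is free abelian of finite rank is used. The remaining work — identifying the images $b_i$ with classes of basic commutators of weight $2$ and verifying $Q_2\sub \ga_3(F)$ — is routine bookkeeping.
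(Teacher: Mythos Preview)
Your proposal is correct and follows essentially the same approach as the paper: the theorem is stated without separate proof because it is just a second application of \thmref{one} to $X=R\cap\ga_2(F)$, $Y=R\cap\ga_3(F)$, with the construction of $Q_1,Q_2$ in the preceding paragraphs playing the role of stage~1 (starting from the basis $W_2\cup U$, normalising the signs in the weight-$2$ commutators, and then diagonalising against the basic commutators of weight~$2$). Your identification of the stacked-basis step and the need for genuine Nielsen moves on the free basis is exactly the content of that setup, and stages~2--3 of \thmref{one} then give $Q_2\cup\hat U$ as a free basis for $R\cap\ga_3(F)$ just as you say.
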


Call $U_2$ the set of {\em $R$-basic commutators} of weight
$2$ and  $W_1$  the set $R$-basic commutators of weight $1$. 
 It is possible to similarly define $R$-basic commutators of higher
 weight and this is the subject of further work.
 

\begin{theorem} Every element $w$ in $R$ can be written uniquely
in the form

$$ w \equiv r_1^{\alpha_1} r_2^{\alpha_2} \ldots r_t^{\alpha_t} \quad
modulo \quad R \cap \gamma_{3}F$$

where the $r_1, r_2, \ldots ,r_t$ are the $R-$basic commutators of
weights $ \leq 2$ and $r_1 < r_2 < \ldots < r_t$ and the $\alpha_i$
are non-negative integers.

\end{theorem}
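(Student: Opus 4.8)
The plan is to read the statement as a polycyclic (Mal'cev) normal form theorem for the finitely generated group $\frac{R}{R\cap\ga_3 F}$, which is nilpotent of class at most $2$: since $[R,R]\sub\ga_2 F$ we have $[[R,R],R]\sub[\ga_2 F,F]=\ga_3 F$. The ordered list $r_1<r_2<\ldots<r_t$ consists of the weight-$1$ $R$-basic commutators $W_1=w_1,\ldots,w_t$ followed by the weight-$2$ $R$-basic commutators, i.e. the set $Q_1$ (there written $U_2$). I would prove existence of the expression by a collection process and uniqueness from the free abelian structure of the two successive factors.

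For existence, take $w\in R$ and, using the structure theorem giving $R$ the free basis $W_1\cup W_2$, write $w$ as a word in $W_1\cup W_2$. Apply the collection process of the proofs of \thmref{one} and \thmref{two}, collecting the weight-$1$ generators $W_1$ to the front; by that process $w$ becomes $w_1^{\al_1}\cdots w_t^{\al_t}$ times an uncollected word $v\in R\cap\ga_2 F$ lying in $\langle W_2\cup U\rangle$, where $U$ is the $U$-construction on $W_1\cup W_2$. Every element of $U$ other than the $[w_i,w_j]$ with $w_i,w_j\in W_1$ already lies in $\ga_3 F$, and by $[a^{-1},b]=[a,b]^{-1}[a,b,a^{-1}]$ and $[a,b^{-1}]=[a,b]^{-1}[a,b,b^{-1}]$ the commutators carrying a negative sign reduce, modulo $\ga_3 F$, to powers of the $[w_i,w_j]$. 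Re-expressing $v$ in the basis $Q_1\cup Q_2$ of $R\cap\ga_2 F$ and discarding the basis $Q_2\cup\hat{U}$ of $R\cap\ga_3 F$, the residue is a product of the $q_i\in Q_1$. As these are central modulo $\ga_3 F$ no further collection is forced, and we obtain $w\equiv r_1^{\al_1}\cdots r_t^{\al_t}\bmod R\cap\ga_3 F$.

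For uniqueness I would use the exact sequence
$$1\to\frac{R\cap\ga_2 F}{R\cap\ga_3 F}\to\frac{R}{R\cap\ga_3 F}\to\frac{R}{R\cap\ga_2 F}\to 1.$$
The quotient $\frac{R}{R\cap\ga_2 F}$ is the image of $R$ in the free abelian group $F/\ga_2 F$; since $w_i\equiv x_i^{d_i}$ in distinct free generators it is free abelian on the images of $W_1$. The kernel $\frac{R\cap\ga_2 F}{R\cap\ga_3 F}$ embeds in the free abelian group $\ga_2 F/\ga_3 F$, and by the structure theorem for $R\cap\ga_2 F$ and $R\cap\ga_3 F$ ($Q_1$ being part of a free basis of $R\cap\ga_2 F$ whose complement $Q_2\cup\hat{U}$ generates $R\cap\ga_3 F$) it is free abelian on the images of $Q_1$. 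Any normal form $r_1^{\al_1}\cdots r_t^{\al_t}$ projects in $\frac{R}{R\cap\ga_2 F}$ to $\prod_{w_i\in W_1}w_i^{\al_i}$, so the weight-$1$ exponents are forced by uniqueness of coordinates in a free abelian group; cancelling them places the element in the kernel, where the weight-$2$ exponents are forced the same way. Hence the expression is unique.

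The main obstacle is the bookkeeping of the collection in the existence step: one must check that after collecting $W_1$ the residue reduces, modulo $R\cap\ga_3 F$, precisely to a product of the genuine weight-$2$ $R$-basic commutators $Q_1$, rather than to an uncontrolled element of $U$. This is where one uses that triple and higher commutators among the $w_i$ die modulo $\ga_3 F$, that the sign-correction identities keep everything in the span of the $[w_i,w_j]$, and that the passage $\hat{W}\cup W_2\to Q_1\cup Q_2$ identifies these with a basis of the free abelian factor $\frac{R\cap\ga_2 F}{R\cap\ga_3 F}$. I would also remark that, both successive factors being torsion-free, the exponents here range over $\Z$ and it is their uniqueness that is the substantive content; the restriction to non-negative exponents is the form the normal form takes, as in \thmref{two}, when the abelian factors carry torsion.
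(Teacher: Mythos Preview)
The paper states this theorem without an explicit proof; it is presented as an immediate consequence of the two structure theorems just before it, which supply free bases $W_1\cup W_2$ for $R$, $W_2\cup U$ (equivalently $Q_1\cup Q_2$) for $R\cap\ga_2 F$, and $Q_2\cup\hat U$ for $R\cap\ga_3 F$. Your argument fills in exactly the proof the paper leaves implicit: the collection process of \thmref{one} extracts the $W_1$-part, the sign identities $[a^{-1},b]=[a,b]^{-1}[a,b,a^{-1}]$ and $[a,b^{-1}]=[a,b]^{-1}[a,b,b^{-1}]$ already recorded in the paper reduce the residue modulo $\ga_3 F$ to a word in $\hat W\cup W_2$ and hence in $Q_1$, and uniqueness follows from the free abelian structure of the two successive factors via the short exact sequence you write down. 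This is precisely the line the paper's setup intends, so your approach coincides with the paper's.

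Your closing remark is correct and worth recording: both $\frac{R}{R\cap\ga_2 F}$ and $\frac{R\cap\ga_2 F}{R\cap\ga_3 F}$ are free abelian (torsion-free), so the exponents $\al_i$ must range over $\Z$; ``non-negative'' cannot give uniqueness here. The paper's own weight $\le n$ version immediately afterwards says ``the $\al_i$ are integers'', so the ``non-negative'' in this statement is a slip rather than an application of the restricted construction of \thmref{two}.
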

\medskip

This process can be continued and we can define a set of
$R-$basic of weight $n$ which will be a basis for $\frac{R \cap
  \gamma_nF}{R \cap \gamma_{n+1}F}$.

This general method follows the
process as given above for the cases $n=2,3$. 
A basic commutator $b$ which
corresponds non-trivially to a free generator $w$ modulo $\gamma_mF\cap R$
is replaced by this $w$ in any further basic commutator which
contains this $b$ as a constituent. The details are omitted here 
but gave rise to the general idea of $R-$basic commutators
of weight $n$.  A Hall-like relative basis theorem then follows:

\begin{theorem} Every element $w$ in $R$ can be written uniquely
  in the form

$$ w \equiv r_1^{\alpha_1} r_2^{\alpha_2} \ldots r_t^{\alpha_t} \quad
  modulo \quad R \cap \gamma_{n+1}F$$

where the $r_1, r_2, \ldots ,r_t$ are the $R-$basic commutators of
weights $ \leq n$ and $r_1 < r_2 < \ldots < r_t$ and the $\alpha_i$
are integers.

\end{theorem}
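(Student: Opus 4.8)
The final theorem is a relative version of the Hall basis theorem: every element of $R$ has a unique expression modulo $R \cap \gamma_{n+1}F$ as an ordered product of powers of the $R$-basic commutators of weight $\leq n$. My plan is to prove this by induction on $n$, using the weight-by-weight structure already established for $n=2$ (Theorem on $\frac{R\cap\ga_2F}{R\cap\ga_3F}$) and $n=3$ as the base cases, and then carrying the inductive step through the iterated construction sketched in the paragraph on $R$-basic commutators of general weight.

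\textbf{Set-up and strategy.} First I would make precise the recursive definition of $R$-basic commutators of weight $n$ that the paper only sketches. At weight $1$ these are $W_1 = w_1,\ldots,w_t$; at weight $2$ they are the set $U_2$ obtained by applying Theorem~\ref{thm:one} to the free abelian factor $\frac{R\cap\ga_2F}{R\cap\ga_3F}$ (giving the free basis $Q_1 \cup Q_2$ with $\hat U$), and then identifying a fresh free generator for each weight-$2$ generator that maps nontrivially modulo $\ga_3F\cap R$. The key inductive device, stated in the excerpt, is the substitution rule: a basic commutator $b$ that corresponds nontrivially to a free generator $w$ modulo $\ga_mF\cap R$ is replaced by $w$ in any higher-weight commutator containing $b$ as a constituent. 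I would formalize this as producing, at each stage, a free basis for $R\cap\ga_mF$ split as (generators lying in $\ga_{m+1}F$) $\cup$ (a $U$-construction set), exactly as in the two displayed theorems for $m=2,3$.

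\textbf{Inductive step.} Assuming the result holds modulo $R\cap\ga_nF$, I would first apply the weight-$1$/weight-$2$ machinery not to $F$ but to the factor $\frac{R\cap\ga_nF}{R\cap\ga_{n+1}F}$. This factor sits inside the free abelian group $\frac{\ga_nF}{\ga_{n+1}F}$ (free abelian on the ordinary basic commutators of weight $n$), so it is itself free abelian and finitely generated in the relevant range; Theorem~\ref{thm:one} then yields a free basis for $R\cap\ga_nF$ of the form $Q_2 \cup \hat U$, with the distinguished generators $Q_1$ reducing to an independent set of weight-$n$ basic commutators modulo $\ga_{n+1}F$. These distinguished generators, after the substitution rule replaces their principal basic-commutator constituents by the free generators already named at lower weight, are precisely the $R$-basic commutators of weight $n$. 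The existence half of the theorem then follows from the collecting process established inside the proof of Theorem~\ref{thm:one}: collecting the weight-$\leq n$ $R$-basic commutators drives every $w\in R$ into the asserted ordered product times a tail in $R\cap\ga_{n+1}F$. Uniqueness follows from the independence assertions (each $B\cup U$ is a free basis, \emph{loc.\ cit.} and [2, Theorem~2.1]), since two distinct ordered products would give a nontrivial relation among images in $\frac{R\cap\ga_nF}{R\cap\ga_{n+1}F}$, contradicting that the $R$-basic commutators of weight $n$ are independent there.

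\textbf{Main obstacle.} The delicate point is verifying that the substitution rule is well defined and consistent across weights: when a weight-$n$ basic commutator $b$ is replaced by a free generator $w$, one must check that this replacement does not disturb the independence or the generating property at higher weights, and that the congruence $q_i \cong b_i^{\al_i} \mod \ga_{n+1}F$ persists after all lower-weight substitutions have been performed. Concretely, I expect the hardest bookkeeping to be showing that the collecting process terminates and produces a \emph{canonical} ordered product, because at each collection of a generator one generates new higher-weight commutators (as in the weight-$3$ correction using $[a^{-1},b]=[a,b]^{-1}[a,b,a^{-1}]$), and one must confirm these all lie in $R\cap\ga_{n+1}F$ and so vanish modulo the quotient. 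Once the recursive definition is pinned down and the substitution rule is shown to preserve freeness, the theorem reduces to assembling the weight-by-weight isomorphisms $\frac{R\cap\ga_kF}{R\cap\ga_{k+1}F}$ for $k\leq n$, which is routine given the two abelian-factor theorems already proved.
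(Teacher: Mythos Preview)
Your proposal follows the same approach that the paper itself sketches: induction on $n$, taking the explicit constructions for $n=2,3$ as the template, applying Theorem~\ref{thm:one} to each successive free abelian factor $\frac{R\cap\ga_nF}{R\cap\ga_{n+1}F}$, and invoking the substitution rule (replace a basic commutator $b$ by the free generator $w$ it corresponds to in higher-weight commutators). The paper does not in fact supply a proof of this theorem beyond that outline --- it states explicitly that ``the details are omitted here'' --- so your plan is exactly in line with what the author indicates, and your identification of the bookkeeping around the substitution rule as the main point needing care is apt.
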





\subsection{Factors related to the Schur Multiplicator}
Suppose $F$ is freely generated on a finite set  and $R$ is 
finitely generated as a normal subgroup. 
Then there exists a basis $w_1, w_2, \ldots,
w_t, w_{t+1}, \ldots , $ for $R$ and a basis $X = x_1, x_2, \ldots ,
x_s$  for $F$ such that $ w_i \equiv x^{\al_i} \mod F'$, $\al_i \not = 0$, 
 for $ 1 \leq i \leq
t \leq s$ and $w_j \in F'$ for $j > t$;  see \ref{sec:five}. 
Let $W_1 = w_1, w_2, \ldots, w_t$ and $W_2 = w_{t+1},\ldots $.

Suppose now $r \in R\cap \ga_2(F)$. Then $r = w_1^{\be_1}w_2^{\be_2}
... w_t^{\be_t}w_{t+1}^{\be_{t+1}} .. \mod R'$ (with only a finite number of
non-zero powers). As $r\in \ga_2(F)$ and $w_j \in \ga_2(F)$ for $j \geq t+1$ 
this implies that $\be_i =0$ for $1
\leq i \leq t$. Thus $r$ is generated modulo $R'$ by elements in $W_2$. 

Apply the $U$-construction to $W_1 \cup W_2$ to get a set $U_1 $ which
is part of a free generating set for $R'$. 
\begin{theorem}\label{thm:three}
$R\cap \ga_2(F)$ has free generating set $ W_2 \cup U_1$ and $R'$ has free
generating set $U_1 \cup U$ where $U$ is the set obtained from the $U$
construction on $W_2 \cup U_1$.
\end{theorem}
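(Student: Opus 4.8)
The plan is to prove \thmref{three} by two successive applications of \thmref{one} to the tower $R \supseteq R \cap \ga_2(F) \supseteq R'$, exactly in the spirit of the earlier remark that once one has worked on $X$ modulo $Y$ one may then pass to $Y$ modulo $Z$. Throughout I use that $R$, being a subgroup of the free group $F$, is itself free with $W_1 \cup W_2$ as a free basis by the construction of \ref{sec:five}, and that $R' \sub \ga_2(F)$, so that $R' \sub R \cap \ga_2(F)$ and each quotient in the tower makes sense.

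\textbf{First application.} Consider the factor $\frac{R}{R \cap \ga_2(F)}$. Its bottom term is normal in $R$ (since $\ga_2(F) \triangleleft F$), and the natural map $R \to \frac{F}{\ga_2(F)}$ identifies $\frac{R}{R\cap\ga_2(F)}$ with the subgroup of the free abelian group $\frac{F}{\ga_2(F)}$ generated by the images of the $w_i$. Since $w_i \equiv x_i^{\al_i} \mod \ga_2(F)$ with $\al_i \neq 0$ for $i \leq t$ and $w_j \in \ga_2(F)$ for $j > t$, this image is free abelian of rank $t$, freely generated by the images of $W_1$, while every element of $W_2$ lies in $R \cap \ga_2(F)$. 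Thus $(W_1, W_2)$ realizes precisely the configuration produced in item~1 of the proof of \thmref{one} ($W_1$ freely generating the factor, $W_2$ inside the kernel), so the independence and generation arguments of that proof apply and give that $W_2 \cup U_1$ is a free basis for $R \cap \ga_2(F)$, where $U_1$ is the $U$-construction on $W_1 \cup W_2$.

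\textbf{Second application.} Now take the factor $\frac{R \cap \ga_2(F)}{R'}$. Here $R'$ is normal in $R \cap \ga_2(F)$ (being even normal in $R$) and contained in it, and $\frac{R\cap\ga_2(F)}{R'}$ embeds in the free abelian group $\frac{R}{R'}$, hence is itself free abelian. Under abelianization $R \to \frac{R}{R'}$, every element of $U_1$, being a commutator of weight $\geq 2$ in elements of $R$, maps to the identity, whereas the elements of $W_2$ map to distinct members of a free basis of $\frac{R}{R'}$; hence the image of $R \cap \ga_2(F)$ is freely generated by the images of $W_2$, so $\frac{R \cap \ga_2(F)}{R'}$ is free abelian on $W_2$. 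With respect to the free basis $W_2 \cup U_1$ of $R \cap \ga_2(F)$ obtained above, the pair $(W_2, U_1)$ therefore again realizes the item-1 configuration of \thmref{one}: $W_2$ freely generates the factor and $U_1 \sub R'$. A second application of \thmref{one} yields that $U_1 \cup U$ is a free basis for $R'$, where $U$ is the $U$-construction on $W_2 \cup U_1$, as claimed.

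\textbf{Main obstacle.} The delicate point is not either invocation of \thmref{one} in isolation but the \emph{chaining} of the two: one must check that the basis $W_2 \cup U_1$ produced at the first stage has exactly the shape required to re-enter \thmref{one} at the second stage. The two facts that make this work are that the commutators constituting $U_1$ all lie in $R'$ (so they serve as the ``$B$'' part, sitting inside the new kernel), while the free generators $W_2$ survive in $\frac{R}{R'}$ and so freely generate $\frac{R\cap\ga_2(F)}{R'}$ (so they serve as the ``$A$'' part). Verifying that the $U_1$-commutators remain independent within the larger basis, and that no element of $W_2$ is accidentally drawn into $R'$, is where the care lies; both follow from the freeness of $R$ on $W_1 \cup W_2$ together with the weight structure built into the $U$-construction.
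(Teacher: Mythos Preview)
Your proposal is correct and follows essentially the same route as the paper: the paper also obtains the basis $W_2\cup U_1$ for $R\cap\ga_2(F)$ by running the collection of $W_1$ exactly as in the proof of \thmref{one} (using that each $w_i\in W_1$ has coefficient sum $0$ in any element of $R\cap\ga_2(F)$), and then passes to $R'$ via the $U$-construction on the new basis. The only difference is cosmetic---the paper re-executes the collection argument by hand for the first step, while you invoke \thmref{one} as a black box after verifying its item~1 hypothesis; and your second application spells out what the paper compresses into a single line.
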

\begin{proof} We need to show that $R\cap \ga_2(F)$ is generated by
$W_2 \cup U_1$. Consider an element $r\in R\cap \ga_2(F)$. This is a
word $w$  in
$W_1 \cup W_2$. We know that the coefficient sum of any element of
$W_1$ in $w$ is $0$. Collect in $w$ the elements of $W_1$ as described
in the proof of \thmref{one}. Since the coefficient sum of any element
of  $W_1$ in $w$
is $0$ and elements of $U_1$ are formed in the collection process it
is then clear that $w$ is a word in $W_2 \cup U_1$. This set is also
independent.

The $U$ construction on  $ W_1$ and $U_1$ gives the free generators of
$R'$ as required.
\end{proof}   

Suppose now $R$  is generated as a normal subgroup by $S= r_1, r_2,
\ldots, r_n$. Then clearly $S$ generates
$\frac{R}{[R,F]}$.
Then there exist a set $\hat{S} = \hat{r}_1, \hat{r}_2, \ldots,
\hat{r}_n$ equivalent to $S$
such that $ \hat{r}_i \equiv x_i^{\al_i} \mod \ga_2(F)$, $\al_i\not = 0$,
 for $ 1 \leq i \leq s \leq
n$,  and $\hat{r}_i
\in \ga_2(F)$ for $i>s$ where $x_1, x_2, \ldots , x_s$ is part of a free
basis for $F$.  Set $T = \hat{r}_{s+1}, \hat{r}_{s+2}, \ldots \hat{r}_{n}$.

\begin{lemma}\label{thmref:four}
$T$ generates $\frac{R\cap \ga_2(F}{[R,F]}$.
\end{lemma}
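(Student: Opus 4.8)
The plan is to exploit two structural facts: that $\frac{R}{[R,F]}$ is abelian and is generated by the images of $\hat{r}_1,\ldots,\hat{r}_n$, and that the ``weight one'' content of any element of $R\cap\ga_2(F)$ is detected by passing to the free abelianisation $\frac{F}{\ga_2(F)}$. The element $r$ to be analysed will be written in terms of the $\hat{r}_i$ modulo $[R,F]$, and then the independence of the $x_i$ in $\frac{F}{\ga_2(F)}$ will force the first $s$ exponents to vanish.

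First I would record the preliminary observations. Since $[R,R]\sub[R,F]$ and conjugation by $F$ is trivial modulo $[R,F]$, the quotient $\frac{R}{[R,F]}$ is abelian, and the image of any set that normally generates $R$ already generates it as an ordinary group. As $\hat{S}$ is equivalent to $S$ it likewise normally generates $R$, so every element of $\frac{R}{[R,F]}$ has the form $\hat{r}_1^{\be_1}\cdots\hat{r}_n^{\be_n}\bmod[R,F]$ with $\be_i\in\Z$. I would also note the easy containment: each $\hat{r}_i\in R$, and $\hat{r}_i\in\ga_2(F)$ for $i>s$ by construction, so $T\sub R\cap\ga_2(F)$ and the subgroup generated by $T$ sits inside $\frac{R\cap\ga_2(F)}{[R,F]}$.

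The core step is the reverse inclusion. Given $r\in R\cap\ga_2(F)$, write $r\equiv\hat{r}_1^{\be_1}\cdots\hat{r}_n^{\be_n}\bmod[R,F]$ and then reduce the entire relation modulo $\ga_2(F)$. Since $[R,F]\sub\ga_2(F)$ and $r\in\ga_2(F)$, while $\hat{r}_i\equiv x_i^{\al_i}$ for $i\le s$ and $\hat{r}_i\equiv 1$ for $i>s$, one obtains $x_1^{\al_1\be_1}\cdots x_s^{\al_s\be_s}\equiv 1\bmod\ga_2(F)$ inside the free abelian group $\frac{F}{\ga_2(F)}$. Because $x_1,\ldots,x_s$ are part of a free basis of $F$, their images are part of a basis of this free abelian group and hence independent, forcing $\al_i\be_i=0$, and so $\be_i=0$ for $1\le i\le s$ as every $\al_i\neq 0$. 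Consequently $r\equiv\hat{r}_{s+1}^{\be_{s+1}}\cdots\hat{r}_n^{\be_n}\bmod[R,F]$, placing $r$ in the subgroup generated by $T$; together with the containment above this shows $T$ generates $\frac{R\cap\ga_2(F)}{[R,F]}$. I expect the only point requiring genuine care to be the preliminary one — confirming that ``equivalent to $S$'' preserves normal generation of $R$, so that the $\hat{r}_i$ really do generate $\frac{R}{[R,F]}$ — whereas the independence argument in $\frac{F}{\ga_2(F)}$ is entirely routine.
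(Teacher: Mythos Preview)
Your proof is correct and follows essentially the same route as the paper: write $r\in R\cap\ga_2(F)$ as a product of the $\hat{r}_i$ modulo $[R,F]$, pass to $F/\ga_2(F)$, and use the independence of the $x_i$ (together with $\al_i\neq 0$) to kill the first $s$ exponents. You are simply more careful with notation (distinguishing the $\be_i$ from the fixed $\al_i$) and you make explicit the easy containment $T\subseteq R\cap\ga_2(F)$ and the fact that ``equivalent to $S$'' still yields generators of $R/[R,F]$, points the paper leaves implicit.
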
 
\begin{proof} Consider $r \in R\cap \ga_2(F)$. Then $ r = 
\displaystyle\prod_{i= 1}^{n}
r_i^{\al_i}\mod [R,F]$. Since $r\in R\cap \ga_2(F)$ and $ r_{i}\in
R\cap \ga_2(F)$ for $i >
s$ it follows that $\displaystyle\prod_{i=1}^{s}r_i^{\al_i} \in \ga_2(F)$
from which  it follows that $\al_i = 0$ for $1 \leq i \leq s$. Thus $T$
generates $\frac{R\cap \ga_2(F)}{[R,F]}$.
\end{proof} 

Now from \thmref{three} each $r_i$ for $i > s$ is a product of elements
from $W_2$ modulo $R'$. From this it follows that exists a $T' =
r'_{s+1}, r'_{s+2}, \ldots, r'_{n}$ equivalent to $T$ and a set $\hat{W}_2 =
\hat{w}_{s+1}, \hat{w}_{s+2}, \ldots, $ equivalent to $W_2$
with ${r}'_i \equiv \hat{w}_i^{\be_i} \mod R'$, $\be_i \not = 0$,  for $ s+1 \leq i \leq t
\leq n$ and $r'_{i} \in R'$ for $t+1 \leq i \leq n$.
Set $W= \{ \hat{w}_i^{\be_i}\, | \, s+1 \leq i \leq t\}$. 
Thus:
\begin{theorem} ${W}$ 
generates $\frac{R\cap F'}{[R,F]}$.
 \end{theorem}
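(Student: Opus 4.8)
The plan is to reduce the statement to the generation result already established for $T$, combined with the single inclusion $R' \sub [R,F]$. Since $F' = \ga_2(F)$, the factor in question is precisely $\frac{R\cap \ga_2(F)}{[R,F]}$, and the preceding lemma shows that $T = \hat{r}_{s+1}, \ldots, \hat{r}_n$ generates this factor. As $T'$ is (Nielsen-)equivalent to $T$, one has $\langle T'\rangle = \langle T\rangle$ and hence $T'$ generates the same factor modulo $[R,F]$; so it suffices to show that every element of $T'$ lies, modulo $[R,F]$, in the subgroup generated by $W$.

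The hinge of the argument is the observation that $R' = [R,R] \sub [R,F]$, because $R \sub F$. Consequently any congruence valid modulo $R'$ remains valid modulo $[R,F]$. I would apply this to the defining relations of $T'$ — namely $r'_i \equiv \hat{w}_i^{\be_i} \mod R'$ for $s+1 \le i \le t$ and $r'_i \in R'$ for $t+1 \le i \le n$ — to obtain $r'_i \equiv \hat{w}_i^{\be_i} \mod [R,F]$ and $r'_i \equiv 1 \mod [R,F]$ respectively. Thus, modulo $[R,F]$, each generator of $T'$ is either a power of an element of $W = \{\hat{w}_i^{\be_i} : s+1 \le i \le t\}$ or trivial.

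It then follows that the subgroup generated by $W$ contains the subgroup generated by $T'$, which is all of $\frac{R\cap \ga_2(F)}{[R,F]} = \frac{R\cap F'}{[R,F]}$; this is the desired conclusion. I would also record that the $\hat{w}_i$ are equivalent to members of $W_2 \sub R\cap \ga_2(F)$, so that $W$ genuinely consists of elements of $R\cap F'$ and the statement really concerns a generating set of the claimed factor.

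There is no deep obstacle here; the content is bookkeeping across the chain of equivalent sets ($S$ to $\hat{S}$, $T$ to $T'$, and $W_2$ to $\hat{W}_2$) while tracking which relations hold modulo $R'$ and which modulo $[R,F]$. The one step I would be careful to state explicitly — and the step on which the whole argument turns — is the inclusion $R' \sub [R,F]$, since it is exactly what allows the free-structure information for $R'$ supplied by \thmref{three} to be pushed down to this Multiplicator-type factor. Note finally that only generation, and not independence, is asserted, which is to be expected because $\frac{R\cap \ga_2(F)}{[R,F]}$ may carry torsion and so need not be free.
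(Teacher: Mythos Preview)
Your argument is correct and is exactly the reasoning the paper leaves implicit: the paper states the theorem with only the word ``Thus:'' after setting up $T'$, $\hat{W}_2$, and the congruences modulo $R'$, and your proof supplies precisely the missing steps --- that $T'$ (being equivalent to $T$) still generates $\frac{R\cap \ga_2(F)}{[R,F]}$ by the preceding lemma, and that the inclusion $R' \sub [R,F]$ pushes the defining congruences of $T'$ down to $[R,F]$ so that each $r'_i$ becomes either an element of $W$ or trivial there. One cosmetic remark: for $s+1\le i\le t$ you have $r'_i \equiv \hat{w}_i^{\be_i}$ which is itself an element of $W$, not merely a power of one, so you can state the conclusion slightly more sharply.
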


\paragraph{\bf References}
\begin{enumerate}
\item P. Hall, {\em The Edmonton notes on Nilpotent groups}, Queen Mary
  College 1970.
\item K. W. Gruenberg, {\em Cohomological Topics in Group Theory}, 
Lecture Notes in Mathematics, vol. 143, Springer-Verlag, Berlin-New York, 1970.
\item Hurley, T.C. \& Ward, M.A. ``Bases for commutator
  subgroups of a free group'' , Proc. RIA, Vol 96A, No. 1, 43-65 (1996). 
\item Magnus,W.,  Karrass, A. , Solitar,D., {\em Combinatorial Group
Theory}, Interscience 1966.
\end{enumerate}

\noindent Ted Hurley \\
Department of Mathematics \\
National University of Ireland, Galway \\
Galway \\
Ireland. \\
email: ted.hurley@nuigalway.ie
\end{document}